\documentclass[12pt]{article}

\usepackage{amssymb,amsmath,amscd}
\usepackage{mathrsfs,mathtools}
\usepackage{amsfonts}
\usepackage{amssymb,amsthm}
\usepackage[arc,all]{xy}
\usepackage{tikz-cd}
\usepackage{hyperref}

\newcommand{\Z}{{\mathbb Z}}

\newcommand{\CC}{{\mathcal C}}

\newcommand{\EE}{{\mathcal E}}
\newcommand{\FF}{{\mathcal F}}

\newcommand{\II}{{\mathcal I}}

\newcommand{\Ascr}{{\mathscr A}}
\newcommand{\Bscr}{{\mathscr B}}
\newcommand{\Cscr}{{\mathscr C}}

%\parskip 5pt
%\parindent +6.8mm
%\topskip+10mm
% \setlength{\oddsidemargin}{0.8cm}
%\setlength{\evensidemargin}{0.8cm} \setlength{\textwidth}{15.15cm}
%\setlength{\textheight}{22cm} \topmargin+10mm \headsep+2mm

\newtheorem{thm}{Theorem}[section]

\newtheorem{defn}[thm]{Definition}

\newcommand{\bcen}{\begin{center}}
\newcommand{\ecen}{\end{center}}

\def\dim{{\rm dim}}

\title{Calabi-Yau categories and\\graded quivers with potential}

\date{}
\author{Jie Ren}

\begin{document}
\maketitle

\begin{abstract}
We prove that the equivalence classes of d-dimensional Calabi-Yau $A_\infty$-categories (dCY category for short) of certain type are in one-to-one correspondence with the gauge equivalence classes of graded quivers with potential.
\end{abstract}

\section{Introduction}

The motivic Donaldson-Thomas theory of an ind-constructible 3-dimensional Calabi-Yau $A_\infty$-category is explored by Kontsevich-Soibelman in \cite{KoSo1}. In particular, if such a category is given by a quiver with potential, then the Donaldson-Thomas invariants (DT invariants for short) satisfy some remarkable properties. Some wall-crossing formulae are given by the mutations of the quiver with potential. In \cite{KoSo1} the authors proved that there is a one-to-one correspondence between the $A_\infty$-equivalence classes of certain type of \textbf{k}-linear 3CY categories and the gauge equivalence classes of quivers with potential.

In this paper we consider dCY categories, which are generated by a finite collection $\EE=\{E_{i}\}_{i\in I}$ of generators satisfying some properties. We prove that there is a one-to-one correspondence between the $A_\infty$-equivalence classes of this type of \textbf{k}-linear dCY categories and the gauge equivalence classes of graded quivers with potential. The generators correspond to the vertices of the quiver, and $Ext^k(E_i,E_j), k=1,\ldots,d-1$ give rise to the graded arrows. The deformation of a dCY category coincides with the gauge equivalence classes of minimal potentials of the quiver.
\\

%{\it Acknowledgements:} I would like to thank Jesse Huang for inspiring conversations.
%\section{Idea}

%The potential W, whose degree is 3-d, consists of the arrows in the "double quiver", i.e., quiver encoding information about $Ext^i,\;i=1,\ldots,d-1$, whose arrows are $a, deg(a)=0,\ldots,-\frac{d-2}{2}$ (d even, i.e., correspond to $Ext^k,\;k=1,\ldots,\frac{d}{2}$) or $-\frac{d-3}{2}$ (d odd, i.e., correspond to $Ext^k,\;k=1,\ldots,\frac{d-1}{2}$) and $a^*,deg(a^*)=-\frac{d-2}{2}\text{(d even, i.e., correspond to $Ext^k,\;k=\frac{d}{2},\ldots,d-1$) or} -\frac{d-1}{2}\text{(d odd, i.e., correspond to $Ext^k,\;k=\frac{d+1}{2},\ldots,d-1$)},\ldots,2-d$. To prove the compatibility of the construction of W with gauge group action (as in Section 8.1, Th 9), let the idea $\JJ$ generated by coordinates on $Ext^0,\; Ext^d$ (since no arrows corresponding to them) and $Ext^{d-1}$ (gives arrows $a^*$ of degree 2-d, thus con't contribute to W for degree reason). All the other coordinates can appear in W. For instance, an arrow of degree 3-d (corresponds to $Ext^{d-2}$) combine with arrows of degree 0 (correspond to $Ext^1$) can give terms in W.

%I guess the DGLA $\widehat{\mathfrak g}$ is shifted from the cyclic words by 2-d since we want the potential (degree 3-d) to be in $H^1(\mathfrak g_{can})$.

%According to van den Bergh, if W consists of only arrows of Q then $\{W,W\}=0$ is vacuous.

\section{d Calabi-Yau categories}

Let's recall the notion of d Calabi-Yau $A_\infty$-categories. For details please see \cite{F,K,K1,KoSo}.

\begin{defn}
An $A_\infty$-category $\CC$ over a field \textbf k consists of the data:
\begin{itemize}
\item[1)]The set of objects $\mathcal{M}=Ob(\CC)$.
\item[2)]The spaces of morphisms $Hom^\bullet$, which for any objects $E,F\in\mathcal M$ is a $\mathbb{Z}$-graded \textbf k-vector space $Hom^\bullet(E,F)=\oplus_{n\in\mathbb{Z}}Hom^n(E,F)$.
\item[3)]The higher composition maps $$m_n: Hom^{l_1}(E_1,E_2)\otimes\cdots\otimes Hom^{l_n}(E_n,E_{n+1})\rightarrow Hom^{l_1+\cdots+l_n+2-n}(E_1,E_{n+1})$$ for $n\geqslant1$ and $l_1,\ldots,l_n\in\mathbb{Z}$, which satisfy higher associativity property in the sense of $A_\infty$-categories. Equivalently, $(\sum_n m_n[1])^2=0$.
\end{itemize}
\end{defn}

Thus $m_1$ is a differential on $Hom^\bullet$, and we denote $$Ext^i(E,F):=H^i(Hom^\bullet(E,F),m_1).$$

\begin{defn}
An $A_\infty$-category has is weakly unital if $Ext^0(E,E)$ contains the identity morphism for any $E\in\mathcal M$.
\end{defn}

\begin{defn}
Let $\CC_1$, $\CC_2$ be two $A_\infty$-categories. An $A_\infty$ functor $\FF: \CC_1\rightarrow\CC_2$ is a collection $\FF_k,\;k\in\Z_{\geqslant0}$ such that
\begin{itemize}
\item[1)] $\FF_0: Ob(\CC_1)\rightarrow Ob(\CC_2)$ is a map of objects,
\item[2)] for $E,F\in Ob(\CC_1)$, $\FF_k(E,F):Hom_{\CC_1}^{l_1}(E,E_1)\otimes\cdots\otimes Hom_{\CC_1}^{l_n}(E_{n-1},F)\rightarrow Hom_{\CC_2}^{l_1+\cdots+l_n+1-n}(E,F)$,
\end{itemize}
and $\sum_k\FF_k[1]$ is a chain map with respect to $\sum_nm_n[1]$.

A functor $\FF:\CC_1\rightarrow\CC_2$ is called an equivalence if $\FF$ is a full embedding, i.e. it induces an isomorphism
$Ext^\bullet(E,E)\cong Ext^\bullet(\FF(E),\FF(F)), \forall E, F\in Ob(\CC_1)$ and moreover, there exists a map
$s:Ob(\CC_2)\rightarrow Ob(\CC_1)$ such that for any object $E\in Ob(\CC_2)$ we have $E\cong\FF(s(E))$.
\end{defn}

Assume that the field \textbf{k} has characteristic zero.

\begin{defn}
A Calabi-Yau category of dimension d is a weakly unital \textbf k-linear triangulated $A_\infty$-category $\mathcal{C}$, such that the $\mathbb{Z}$-graded vector space $Hom^\bullet(E,F)=\oplus_{n\in\mathbb{Z}}Hom^n(E,F)$ is finite-dimensional for any objects E and F. This implies that $Ext^\bullet(E,F)$ is also finite-dimensional. Moreover, we have the following data:
\begin{itemize}
\item[$\bullet$]A non-degenerate pairing
$$(\bullet,\bullet): Hom^\bullet(E,F)\otimes Hom^\bullet(F,E)\rightarrow{\rm k}[-d],$$
which is symmetric with respect to interchaging E and F.
\item[$\bullet$]A polylinear $\mathbb{Z}/N\mathbb{Z}$-invariant map $$W_N:\otimes_{1\leqslant i\leqslant N}(Hom^\bullet(E_i,E_{i+1})[1])\rightarrow{\rm k}[3-d],$$
for any $N\geqslant2$ and objects $E_1=E_{N+1},\ldots,E_N$.
\item[$\bullet$]The above maps are compatible in the sense of $$W_N(a_1,\ldots,a_N)=(m_{N-1}(a_1,\ldots,a_{N-1}),a_N).$$
\end{itemize}
\end{defn}

The collection $(W_N)_{N\leqslant2}$ is called the potential of $\mathcal{C}$.

\section{Main theorem}

Let $Q$ be a graded quiver with $I$ the set of vertices and $\Omega$ the set of arrows, i.e., there is a map $\Omega\rightarrow\Z$. The potential is an element $W\in\widehat{\textbf kQ}/\overline{[\textbf kQ,\textbf kQ]}$. The group of grading preserving continuous automorphisms of the completed path algebra $\widehat{\textbf kQ}$ of $Q$ preserving the vertices, acts on the set of potentials of $Q$, and is called the gauge action.

\begin{thm}
Let $\Cscr$ be a d-dimensional \textbf{k}-linear Calabi-Yau category generated by a finite collection $\EE=\{E_{i}\}_{i\in I}$ of generators satisfying
\begin{itemize}
\item[$\bullet$]$Ext^{0}(E_{i},E_{i})=\textbf{k} \cdot id_{E_{i}}$,
\item[$\bullet$]$Ext^{0}(E_{i},E_{j})=0, \forall i\neq j$,
\item[$\bullet$]$Ext^{<0}(E_{i},E_{j})=0, \forall i, j$.
\end{itemize}
The equivalence classes of such categories with respect to $A_{\infty}$-transformations preserving the Calabi-Yau structure and $\mathcal {E}$, are in one-to-one correspondence with the gauge equivalence classes of pairs $(\overline Q,W)$. Here $Q$ is a finite graded quiver (no cycles of length 2 whose arrows are both of degree $\frac{2-d}{2}$ if d is even) whose arrows are of degrees $\lfloor\frac{3-d}{2}\rfloor,\ldots,0$, and $\overline Q$ is the double quiver obtained by adding the dual arrows $a^*:j\rightarrow i$ of degree $2-d-r$ to $Q$ whenever there is an arrow $a:i\rightarrow j$ in $\Omega$ of degree $r$. And $W$ is a minimal potential (i.e. its Taylor decomposition starts with terms of degree at least 3) of degree $3-d$ satisfying $\{W,W\}=0$.
\end{thm}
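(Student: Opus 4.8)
The plan is to adapt the $d=3$ argument of Kontsevich--Soibelman \cite{KoSo1}, upgrading it to carry $\Z$-gradings on the arrows and to deal with the self-dual middle degree that appears when $d$ is even. Both sides of the asserted bijection are really isomorphism classes of objects in groupoids — minimal cyclic $A_\infty$-structures on a fixed graded $R$-bimodule on one side, potentials on a fixed double quiver modulo the gauge group on the other — so I will construct two mutually inverse assignments and then match the two notions of equivalence. \emph{Step 1 (minimal cyclic model).} Given $\Cscr$ and $\EE$, form the $A_\infty$-algebra $A=\bigoplus_{i,j}Hom^\bullet_{\Cscr}(E_i,E_j)$ over the semisimple ring $R=\prod_{i\in I}\textbf{k}$. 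By the homological perturbation lemma $A$ is $A_\infty$-equivalent to its cohomology $\Ascr=Ext^\bullet$ with a minimal $A_\infty$-structure, which can be taken strictly unital; since $\Char\textbf{k}=0$ and the CY pairing is non-degenerate, the decomposition/minimal-model theorem for cyclic $A_\infty$-algebras lets us choose this minimal model \emph{cyclic} (the pairing is $m_1$-closed and transports along the perturbation, cf. \cite{KoSo1,KoSo}). The three hypotheses on $Ext^{\le 0}$ say precisely that $\Ascr$ is non-negatively graded with $\Ascr^0=R$, and Serre duality from the pairing then forces $\Ascr^{>d}=0$ and $\Ascr^d=\bigoplus_i\textbf{k}\,\theta_i$ with $\theta_i$ dual to $id_{E_i}$.

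\textbf{Step 2 (category $\to$ quiver with potential).} Set the vertex set of $Q$ to be $I$, and let the arrows of $\overline Q$ from $i$ to $j$ be a basis of $Ext^k(E_i,E_j)$ placed in degree $1-k$ for $k=1,\dots,d-1$. The CY pairing restricts to a perfect pairing $Ext^k(E_i,E_j)\otimes Ext^{d-k}(E_j,E_i)\to\textbf{k}$; this is exactly the incidence $a\leftrightarrow a^*$ with $\deg a+\deg a^*=2-d$, and choosing a splitting into the ``lower half'' $k\le\lceil\frac{d-1}{2}\rceil$ recovers $Q\subset\overline Q$. When $d$ is even this splitting requires a Lagrangian-type choice inside the self-dual space $Ext^{d/2}$, and that choice is precisely what the parenthetical restriction on $Q$ (no length-$2$ cycle with both arrows of degree $\frac{2-d}{2}$) normalizes. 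Define $W=\sum_{N\ge 2}W_N\in\widehat{\textbf{k}\overline Q}/\overline{[\textbf{k}\overline Q,\textbf{k}\overline Q]}$ by $W_N(a_1,\dots,a_N)=(m_{N-1}(a_1,\dots,a_{N-1}),a_N)$, taking for $W$ only the part not involving the omitted loop variables $id_{E_i},\theta_i$ (the remaining part of the full cyclic function on $\Ascr$ is fixed by strict unitality). Cyclic invariance of the pairing makes $W$ a well-defined cyclically symmetric element of degree $3-d$. Two points need checking: (i) $W$ genuinely involves no $\theta_i$ together with honest arrows — after cyclic rotation such a term equals $(m_{N-1}(a_1,\dots,a_{N-1}),\theta_i)$, forcing $m_{N-1}(\cdots)\in\textbf{k}\,id_{E_i}$; strict unitality kills $m_{N-1}$ with a unit input for $N-1\ge 3$, the degree constraint $\sum l_j+3-N=0$ then rules out all $N>3$, and for $N=3$ it forces both remaining inputs to be units, i.e.\ no honest arrows occur; (ii) $W$ is minimal, since $W_2(a_1,a_2)=(m_1a_1,a_2)=0$ by $m_1=0$ (and $W_1=0$ as the category is uncurved). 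Finally, the $A_\infty$-relations $\sum m(m)=0$ for the cyclic algebra $\Ascr$ are equivalent, via the dictionary of formal noncommutative symplectic geometry (\cite{KoSo1} and Kontsevich's formal noncommutative symplectic geometry), to the master equation $\{W,W\}=0$ for the necklace bracket attached to the pairing on $\overline Q$.

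\textbf{Step 3 (reverse assignment) and Step 4 (equivalences).} Conversely, given $(\overline Q,W)$ as in the statement, let $\Ascr_{ij}$ be the span of the arrows $i\to j$, together with $\textbf{k}\,id_{E_i}\oplus\textbf{k}\,\theta_i$ when $i=j$, with the tautological pairing; the cyclic derivatives of $W$ define maps $m_{N-1}$, extended by the strict-unit rule on the unit variables, and $\{W,W\}=0$ together with non-degeneracy says exactly that $(\Ascr,(m_n))$ is a minimal cyclic $A_\infty$-algebra. Its category of twisted complexes is then a \textbf{k}-linear triangulated $d$CY category generated by the $E_i$ (the pairing extends to the perfect hull), and the three $Ext$-hypotheses hold by construction, with minimality of $W$ guaranteeing $Ext^0(E_i,E_i)=\textbf{k}$ and the absence of new low-degree $Ext$'s. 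These two assignments are inverse up to the choices made in Step 2 (a basis of each $Ext^k(E_i,E_j)$ and the splitting of the middle degree), and these choices form precisely the group of grading-preserving vertex-fixing automorphisms of $\widehat{\textbf{k}\overline Q}$; hence, modulo gauge, the assignments are genuinely inverse. It remains to match equivalences: an $A_\infty$-equivalence of $d$CY categories preserving $\EE$ and the CY structure restricts, on minimal cyclic models, to a strictly unital cyclic $A_\infty$-isomorphism, i.e.\ a formal symplectomorphism fixing the vertices, whose action on cyclic functions is by definition the gauge action on $W$; conversely every gauge transformation is realized by such an isomorphism. Assembling Steps 1--4 yields the bijection.

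\textbf{Main obstacle.} The heart of the matter is not combinatorial but is Step 1 together with the cyclicity bookkeeping threaded through every step: showing that the minimal model and all relevant $A_\infty$-equivalences can be chosen compatibly with the CY (cyclic) pairing in this graded setting, and pinning down the precise normal form in the self-dual middle degree when $d$ is even, which is the genuine source of the length-$2$-cycle restriction on $Q$. Once the cyclic formal-geometry dictionary is set up in the graded case — cyclic $A_\infty$ on the canonical symplectic formal space $\leftrightarrow$ potential with $\{W,W\}=0$, and cyclic $A_\infty$-isomorphisms $\leftrightarrow$ gauge transformations — what remains is the degree accounting indicated above, which is routine.
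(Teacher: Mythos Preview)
Your outline is coherent and the construction of the map in both directions (Steps 1--3) matches the paper's. The substantive difference, and the place where your argument is incomplete, is Step 4.

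The paper does \emph{not} establish the match of equivalences by the direct dictionary you invoke. Instead, after writing the full potential on $Ext^\bullet(E,E)[1]$ as $W=W_{can}+W_0$ with $W_{can}=\alpha^2\beta+\sum_{r,n}(\alpha x_{r,n}\xi_{2-d-r,n}-\alpha\xi_{2-d-r,n}x_{r,n})$, it treats minimal $d$CY structures as Maurer--Cartan elements in a DGLA $\mathfrak g_{can}\subset\big(\prod_{k\geqslant1} Cycl^k(A_{can}[1])^\vee\big)[2-d]$ with bracket the Poisson bracket and differential $\{W_{can},\cdot\}$, and minimal quiver potentials as Maurer--Cartan elements in a DGLA $\mathfrak h$ of cyclic words in the arrow variables $x_{r,n},\xi_{r,n}$ with trivial differential. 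The lift $W_0\mapsto W_{can}+W_0$ is a DGLA morphism $\Psi:\mathfrak h\to\mathfrak g_{can}$; using that $\widehat{\mathfrak g}$ is quasi-isomorphic to $CC_\bullet(A_{can})^\vee$ and the splitting $HC_\bullet(A_{can})\simeq HC_\bullet(A_{can}^+)\oplus HC_\bullet(\textbf k)$, the paper checks that $H^1(\Psi)$ is an isomorphism, $H^0(\Psi)$ is onto and $H^2(\Psi)$ is injective, and concludes by Manetti's criterion \cite[Th.~3.1]{M} that $\Psi$ induces a bijection on gauge classes of Maurer--Cartan solutions.

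This DGLA comparison is exactly what your Step 4 elides. A strictly unital cyclic $A_\infty$-automorphism of the minimal model is a formal symplectomorphism of the \emph{full} space with coordinates $\alpha,\beta,x_{r,n},\xi_{r,n}$, whereas the gauge group in the statement acts only on $\widehat{\textbf k\overline Q}=\textbf k\langle\langle x_{r,n},\xi_{r,n}\rangle\rangle$. Saying the former ``is by definition'' the latter is not correct: you must show that such symplectomorphisms preserve the ideal generated by $\alpha,\beta,\xi_{2-d,n}$ (the paper observes this, for grading reasons) and, more to the point, that passing to the quotient induces a \emph{bijection} between the two sets of gauge classes --- that nothing is lost and no spurious identifications appear when the unit and co-unit directions are thrown away. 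Your sentence that the ambiguity in Steps 2--3 ``forms precisely the group of grading-preserving vertex-fixing automorphisms of $\widehat{\textbf k\overline Q}$'' is also off: a choice of basis and of Lagrangian splitting only accounts for the \emph{linear} part of that group, while the higher Taylor components of an $A_\infty$-isomorphism move $W_0$ nontrivially. One can probably push your direct approach through with a careful normal-form argument for strictly unital symplectomorphisms, but as written this is the missing step, and the paper's route via cyclic homology and Manetti's theorem is what supplies it.
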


\begin{proof}

Let's denoted by $\Ascr$ the set of equivalence classes of such $d$ Calabi-Yau categories, and
$\Bscr$ the set of equivalence classes of finite graded quivers with minimal potential.

Given such a category $\Cscr$, we associate a quiver $Q$ whose vertices $\{i\}_{i\in I}$ are in one-to-one correspondence with $\mathcal {E}=\{E_{i}\}_{i\in I}$. To give the arrows, note that $\dim Ext^k(E_i,E_j)=\dim Ext^{d-k}(E_j,E_i)^\vee=\dim Ext^{d-k}(E_j,E_i)$ since $\Cscr$ is $d$ Calabi-Yau. If $d$ is odd, then the number of arrows $a:i\rightarrow j$ of degrees $r=\frac{3-d}{2},\ldots,0$ is equal to $\dim Ext^{-r+1}(E_{i}, E_{j})$, and the dual arrows $a^*:j\rightarrow i$ are given by $Ext^k(E_j,E_i),k=\frac{d+1}{2},\ldots,d-1$, where $k=-|a^*|+1$. If $d$ is even, then the number of arrows $a:i\rightarrow j$ of degrees $r=\frac{4-d}{2},\ldots,0$ is equal to $\dim Ext^{-r+1}(E_{i}, E_{j})$, and the dual arrows $a^*:j\rightarrow i$ are given by $Ext^k(E_j,E_i),k=\frac{d+2}{2},\ldots,d-1$, where $k=-|a^*|+1$. In degree $\frac{2-d}{2}$, for any pair of distinct vertices $i$ and $j$ one can chose the orientation of the arrows such that the number of arrows $a:i\rightarrow j$ is $\dim Ext^{\frac{d}{2}}(E_i,E_j)$ and the number of arrows $a^*:j\rightarrow i$ is $\dim Ext^{\frac{d}{2}}(E_j,E_i)$, or the other way around (the orientation in this degree doesn't matter since $\dim Ext^{\frac{d}{2}}(E_i,E_j)=\dim Ext^{\frac{d}{2}}(E_j,E_i)$); the number of loops in $Q$ at any $i\in I$ of degree $\frac{2-d}{2}$ is equal to $\frac{1}{2}\dim Ext^{\frac{d}{2}}(E_i,E_i)$ (one can do this because the supersymmetric non-degenerate pairing on $Ext^{\bullet}(E_{i},E_{i})$ leads to a symplectic pairing on $Ext^{\frac{d}{2}}(E_{i},E_{i})$, thus $\dim Ext^{\frac{d}{2}}(E_{i},E_{i})$ is even). 
%\{The quiver obtained in this way when $d$ is even has no cycles of length 2 whose arrows are both of degree $\frac{2-d}{2}$(why do we need this?)\}. 
The restriction of the potential of $\Cscr$ to $Ext^k,k=1,\ldots,d-1$ gives $W$. This construction defines a map $\Phi: \Ascr\rightarrow\Bscr$.

To prove that $\Phi$ is a bijection, we consider $\mathscr{C}$ with a single generator $E$, and quiver $Q$ with a single vertex for simplicity. The general case can be proved in a similar way.

Let $Q$ be a quiver with one vertex and loops of degrees $r=\lfloor\frac{3-d}{2}\rfloor,\ldots,0$, and $\overline Q$ its double quiver endowed with a minimal potential $W_0$. We will construct a $d$ Calabi-Yau category with one generator $E$, such that the numbers of loops of various degrees coincide with the above construction. We will find an explicit formula for the potential on $A=Hom^\bullet(E,E)$. Let's consider the graded vector space
\begin{equation}
\begin{array}{ll}
Ext^\bullet(E,E)[1]&=Ext^0(E,E)[1]\oplus Ext^1(E,E)\oplus\cdots\oplus Ext^d(E,E)[-d+1]\\&\cong\textbf{k}[1]\oplus Ext^1(E,E)\oplus\cdots\oplus\textbf{k}[-d+1].
\end{array}
\end{equation}

We introduce graded coordinates on $Ext^{\bullet}(E,E)[1]$:
\begin{itemize}
\item[a)]the coordinate $\alpha$ of degree 1 on $Ext^0(E,E)[1]$,
\item[b)]the coordinate $\beta$ of degree $-d+1$ on $Ext^d(E,E)[-d+1]$,
\item[c)]
 \begin{itemize}
 \item[$\bullet$]when $d$ is odd, the coordinates $\{x_{r,n},\;n=1,\ldots,\dim Ext^{-r+1}(E,E)\}$ of degree $r$ on $Ext^{-r+1}(E,E)[r]$ for $r=\frac{3-d}{2},\ldots,0$,
 \item[$\bullet$]when $d$ is even, the coordinates $\{x_{r,n},\;n=1,\ldots,\dim Ext^{-r+1}(E,E)\}$ of degree $r$ on $Ext^{-r+1}(E,E)[r]$ for $r=\frac{4-d}{2},\ldots,0$, and the coordinates $\{x_{\frac{2-d}{2},n},\;n=1,\ldots,\frac{1}{2}\dim Ext^{\frac{d}{2}}(E,E)\}$ of degree $\frac{2-d}{2}$ on $Ext^{\frac{d}{2}}(E,E)[\frac{2-d}{2}]$,
 \end{itemize}
\item[d)]
 \begin{itemize}
 \item[$\bullet$]when $d$ is odd, the coordinates $\{\xi_{r,n},\;n=1,\ldots,\dim Ext^{-r+1}(E,E)\}$ of degree $r$ on $Ext^{-r+1}(E,E)[r]$ for $r=2-d,\ldots,\frac{1-d}{2}$,
 \item[$\bullet$]when $d$ is even, the coordinates $\{\xi_{r,n},\;n=1,\ldots,\dim Ext^{-r+1}(E,E)\}$ of degree $r$ on $Ext^{-r+1}(E,E)[r]$ for $r=2-d,\ldots,\frac{-d}{2}$, and the coordinates $\{\xi_{\frac{2-d}{2},n},\;n=1,\ldots,\frac{1}{2}\dim Ext^{\frac{d}{2}}(E,E)\}$ of degree $\frac{2-d}{2}$ on $Ext^{\frac{d}{2}}(E,E)[\frac{2-d}{2}]$,
 \end{itemize}
\end{itemize}

The Calabi-Yau structure on $A$ gives rise to the minimal potential $W=W(\alpha, x_{r,n}, \xi_{r,n}, \beta)$, which is a series in cyclic words on the space $Ext^{\bullet}(E,E)[1]$. If it arises from the pair $(\overline Q,W_0)$, then the restriction $W(0,x_{r,n},\xi_{r\neq2-d,n},0)$ must coincide with $W_0=W_0(x_{r,n},\xi_{r\neq2-d,n})$. Furthermore, $A$ defines a non-commutative formal  pointed graded manifold endowed with a symplectic structure (c.f. \cite{KoSo}). The potential $W$ satisfies the equation $\{W,W\}=0$, where $\{\bullet,\bullet\}$ is the corresponding Poisson bracket. So we need to construct an extension of $W_0$ to the formal series $W$ of degree $3-d$ in cyclic words on the graded vector space $Ext^{\bullet}(E,E)[1]$, satisfying $\{W,W\}=0$ with respect to the Poisson bracket
$$\{f,g\}=\sum_{r,n}\left[\frac{\partial}{\partial x_{r,n}},\frac{\partial}{\partial\xi_{2-d-r,n}}\right](f,g)+\left[\frac{\partial}{\partial\alpha},\frac{\partial}{\partial\beta}\right](f,g).$$

Let's start with $W_{can}=\alpha^{2}\beta+\sum_{r,n}(\alpha x_{r,n}\xi_{2-d-r,n}-\alpha\xi_{2-d-r,n}x_{r,n})$. This potential makes
$Ext^{\bullet}(E,E)$ into a $d$ Calabi-Yau algebra with associative product and the unit. Any minimal potential $W_0$ on $\bigoplus_{k=1}^{d-2}Ext^k(E,E)$ can be lifted to the minimal potential on $Ext^\bullet(E,E)$ by setting
$W:=W_{can}+W_0$. We claim that $\{W,W\}=0$. Indeed,
\begin{equation}
\begin{array}{ll}
\{W_{can},W_{can}\}&=\sum\limits_{r,n}[\frac{\partial
W_{can}}{\partial x_{r,n}},\frac{\partial W_{can}}{\partial\xi_{2-d-r,n}}]+[\frac{\partial W_{can}}{\partial\alpha},\frac{\partial W_{can}}{\partial\beta}]\\&=\sum\limits_{r,n}[(\xi_{2-d-r,n}\alpha-\alpha\xi_{2-d-r,n})(\alpha
x_{r,n}-x_{r,n}\alpha)\\&-(\alpha
x_{r,n}-x_{r,n}\alpha)(\xi_{2-d-r,n}\alpha-\alpha\xi_{2-d-r,n})]\\&+(\alpha\beta+\beta\alpha+\sum\limits_{r,n}(x_{r,n}\xi_{2-d-r,n}-\xi_{2-d-r,n}x_{r,n}))\alpha^2\\&-\alpha^2(\alpha\beta+\beta\alpha+\sum\limits_{r,n}(x_{r,n}\xi_{2-d-r,n}-\xi_{2-d-r,n}x_{r,n}))
\\&=0,\nonumber
\end{array}
\end{equation}
and $\{W_0,W_0\}=0$ holds trivially. Moreover, $$\{W_{can},W_0\}=\alpha\sum_{r\neq2-d,n}\left[\frac{\partial W_0}{\partial\xi_{2-d-r,n}},\xi_{2-d-r,n}\right]-\alpha\sum_{r\neq2-d,n}\left[x_{r,n},\frac{\partial W_0}{\partial x_{r,n}}\right]=0$$ since $\sum_{r\neq2-d,n}[\frac{\partial W_0}{\partial\xi_{2-d-r,n}},\xi_{2-d-r,n}]=\sum_{r\neq2-d,n}[x_{r,n},\frac{\partial W_0}{\partial x_{r,n}}]=0$.

Next we need to check compatibility of the above construction with the gauge group action for $d>2$ (there's no $W_0$ when $d=2$). Let $G_0$ be the subgroup of the grading preserving automorphisms of the group of continuous automorphisms of the algebra of formal series $\textbf{k}\langle\langle\alpha,x_{r,n},\xi_{r,n},\beta\rangle\rangle$, and $\II\subset\textbf{k}\langle\langle\alpha,x_{r,n},\xi_{r,n},\beta\rangle\rangle$ a closed two-sided ideal generated by $\alpha$, $\beta$ and $\xi_{2-d,n}$. Since the group $G_0$ preserves $\II$, we obtain a homomorphism of groups $G_0\rightarrow Aut(\textbf{k}\langle\langle x_{r,n},\xi_{r\neq2-d,n}\rangle\rangle)$.

The above construction from $Q$ to $\mathscr{C}$ shows that $\Phi$ is a surjection.

Finally, we need to check that $\Phi$ is an injection. The $d$
Calabi-Yau algebras we are considering can be thought of as
deformations of the $d$ Calabi-Yau algebra
$A_{can}=Ext^{\bullet}(E,E)$ corresponding to the potential
$W_{can}$. The deformation theory of $A_{can}$ is controlled by a differential graded Lie algebra (DGLA) $\mathfrak{g}_{can}=\bigoplus_{n\in\mathbb{Z}}\mathfrak{g}_{can}^n$, which is a DG Lie subalgebra of the DGLA
$\widehat{\mathfrak{g}}=(\prod_{k\geqslant1}Cycl^k(A_{can}[1])^{\vee})[2-d]=\bigoplus_{n\in\mathbb{Z}}\widehat{\mathfrak{g}}^n$. Here we write $\widehat{\mathfrak{g}}^n=\{w|coh.deg\:w=n\}$, and $\mathfrak{g}_{can}^n=\{w\in\widehat{\mathfrak{g}}^{n}|cyc.deg\:w\geqslant n+2\}$, where coh.deg means the cohomological degree of $W$, and cyc.deg means the number of letters $\alpha,\:x_{r,n},\:\xi_{r,n},\:\beta$ that $W$ contains. In these DGLAs, the Lie bracket is given by the Poisson bracket and the differential is given by $D=\{W_{can}, \bullet\}$. The set of $A_\infty$-equivalence classes of minimal $d$ Calabi-Yau agebras can be identified with the set of gauge equivalence classes of solutions $\gamma\in \mathfrak{g}^1_{can}$ to the Maurer-Cartan equation $D\gamma+\frac{1}{2}[\gamma,\gamma]=0$. The DGLA $\mathfrak{g}_{can}$ is indeed a DG Lie subalgebra of $\widehat{\mathfrak{g}}$ since $D$ increases both coh.deg and cyc.deg by 1 and the Lie bracket preserves the difference between coh.deg and cyc.deg as well. As vector spaces,
$\widehat{\mathfrak{g}}=\mathfrak{g}_{can}\bigoplus\mathfrak{g}$,
where $\mathfrak{g}=\bigoplus_{n\in\mathbb{Z}}\mathfrak{g}^{n}$, and $\mathfrak{g}^{n}=\{w\in\widehat{\mathfrak{g}}^{n}|cyc.deg\:w<n+2\}$. For the same reason as $\mathfrak{g}_{can}$, we have that
$\mathfrak{g}$ is also a DG Lie subalgebra of $\widehat{\mathfrak{g}}$. It follows that $\mathfrak{g}_{can}$ is a
direct summand of the complex $\widehat{\mathfrak{g}}$. The latter is quasi isomorphic to the dual cyclic complex $CC_{\bullet}(A_{can})^{\vee}$. Let $A_{can}^{+}\subset A_{can}$ be the non-unital $A_{\infty}$-subalgebra consisting of terms of positive cohomological degree. Then for cyclic homology, $HC_{\bullet}(A_{can})\simeq HC_{\bullet}(A_{can}^{+})\bigoplus HC_{\bullet}(\textbf{k})$. In terms of dual complex $\widehat{\mathfrak{g}}$, this isomorphism means the decomposition into a direct sum of the space of cyclic series in variables $x_{r,n},\:\xi_{r,n},\:\beta$ (corresponds to $HC_{\bullet}(A_{can}^{+})^{\vee}$), and the one in variable $\alpha$ (corresponds to $HC_{\bullet}(\textbf{k})^{\vee}$). We see that the series in $\alpha$ don't contribute to the cohomology of $\mathfrak{g}_{can}$ because of the degree reason.
%[($\alpha^n$ has cohomologycal degree n+d-2 but the cyclic degree is not bigger than or equal to n+d-2+2), (since $\{W_{can}, \alpha\}=-\alpha^{2}$)].
Moreover, the cohomological degree of series in $x_{r,n},\:\xi_{r,n},\:\beta$ is non-positive. Recall that we shifted the cohomological grading in the Lie algebras by $2-d$, hence $H^{>d-2}(\mathfrak{g}_{can})=0$.

The set of gauge equivalence classes of minimal potentials of $\overline Q$ can be identified with the set of gauge equivalence classes of solutions to the Maurer-Cartan equation in the DGLA $\mathfrak h=\mathfrak h^0\oplus\mathfrak h^1$, where $\mathfrak h^0=\{w\in\widehat{\mathfrak g}^0|w\text{ consists of }x_{r,n}, \xi_{r,n}\}$, and $\mathfrak h^1=\{w\in\widehat{\mathfrak g}^1|w\text{ consists of }x_{r,n},\xi_{r\neq2-d,n}\}$. The differential is trivial and the Lie bracket is the above Poisson bracket. Here $\mathfrak h^0$ is identified with the Lie algebra of continuous derivations of the topological algebra $\textbf{k}\langle\langle x_{r,n},\xi_{r,n}\rangle\rangle$ preserving the augmentation ideal $(x_{r,n},\xi_{r,n})$, and we identify $\mathfrak h^1$ with the $\mathfrak h^0$-module of minimal cyclic potentials on $A^1$. The above construction of $W=W_{can}+W_0$ gives rise to a homomorphism of DGLAs $\Psi: \mathfrak h\rightarrow\mathfrak g_{can}$, which induces maps on cohomology $H^i(\Psi):H^i(\mathfrak h)\rightarrow H^i(\mathfrak g_{can})$. The map $H^1(\Psi)$ is an isomorphism, the map $H^2(\Psi)$ is injective and $H^0(\Psi)$ is surjective, thus the corresponding deformation theories are isomorphic by \cite[Th.~3.1]{M}. This proves the injectivity of $\Phi$.

%(If there's no $\xi$'s in W then we argue as follows(but we probably do need $\xi$'s):)The set of gauge equivalence classes of minimal potentials of $Q$ can be identified with the set of gauge equivalence classes of solutions to the Maurer-Cartan equation in the DGLA $\mathfrak h=\mathfrak h^0\oplus\mathfrak h^1$, where $\mathfrak h^0=\{w\in\widehat g^0|w\:consists\:of\:exactly\:of\:\xi_{r,n}\:and\:at\:least\:one\:of\:x_{r,n}\}$, and $\mathfrak h^1=\{w\in\widehat{\mathfrak g}^1|w\:consists\:of\:x_{r,n}\}$. Here $\mathfrak h^0$ is identified with the Lie algebra of continuous derivations of the topological algebra $\textbf{k}\langle\langle x_{r,n}\rangle\rangle$ preserving the augmentation ideal $(x_{r,n})$, and we identify $\mathfrak h^1$ with the $\mathfrak h^0$-module of minimal cyclic potentials on $A^1$. The above construction of $W=W_{can}+W_0$ gives rise to a homomorphism of DGLAs $\Psi: \mathfrak h\rightarrow\mathfrak g_{can}$.

\end{proof}

\end{document}